\newtheorem{theorem}{Theorem}
\newtheorem*{thm}{Theorem}
\newtheorem*{proposition}{Proposition}
\newtheorem{corollary}{Corollary}
\newtheorem{lemma}{Lemma}
\theoremstyle{definition}
\newtheorem{definition}{Definition}
\theoremstyle{remark}
\begin{document}

\title[]{Extreme Values of the Fiedler Vector on Trees}
\keywords{Fiedler vector, Trees, Hot Spots Conjecture, Random Walk, Spectral Graph Theory, Longest Path, Hitting times, Potential Theory.}
\subjclass[2010]{05C05, 05C38, 31E05, 35B51.}

\author[]{Roy R. Lederman}
\address{Department of Statistics, Yale University, New Haven, CT 06511, USA}
\email{roy.lederman@yale.edu}

\author[]{Stefan Steinerberger}
\address{Department of Mathematics, University of Washington, Seattle, WA 98195, USA}
\email{steinerb@uw.edu}

\begin{abstract} Let $G$ be a tree on $n$ vertices and let $L = D-A$ denote the Laplacian matrix on $G$. The second-smallest eigenvalue $\lambda_{2}(G) > 0$, also known as
the algebraic connectivity, as well as the associated eigenvector $\phi_2$ have been of substantial interest. We investigate the question of when the maxima and minima of $\phi_2$ are
assumed at the endpoints of the longest path in $G$. Our results also apply to more general graphs that `behave globally' like a tree but can exhibit more complicated local structure. The crucial new ingredient is a reproducing formula for the eigenvector $\phi_k$.
\end{abstract}

\maketitle

\section{Introduction}
\subsection{Introduction.} 
Let $G=(V,E)$ be a simple, undirected, unweighted, connected graph on $n$ vertices $V=\left\{v_1, \dots, v_n\right\}$. 
The adjacency matrix $A \in \left\{0,1\right\}^{n \times n}$  encodes the connections between the
vertices via $A_{ij}=1$ if the $i$ and $j$ are connected by an edge $(i,j) \in E$ and $A_{ij}=0$ otherwise. 
The degree matrix $D$ is the diagonal matrix $d_{ii} = \deg(v_i) = \sum_j A_{ij}$. The Laplacian matrix of the graph is defined as
\begin{equation}\label{eq:lap}
 L = D - A.
\end{equation}
The Laplacian is symmetric and its quadratic form representation immediately implies that all its eigenvalues are nonnegative: if
$f:V \rightarrow \mathbb{R}$, then
\begin{equation}\label{eq:quad}
 \left\langle f, Lf \right\rangle = \sum_{u \sim_E v} (f(u) - f(v))^2.
 \end{equation}
We order its eigenvalues by their size
$$ \lambda_n(G) \geq \lambda_{n-1}(G) \geq \dots \geq \lambda_{2}(G) \geq \lambda_1(G) = 0.$$
We refer to \cite{chung, davies, mohar} for an introduction to spectral graph theory.
It is not difficult to see that the unique eigenvector associated with the eigenvalue $\lambda_1(G)=0$ is the vector having constant entries and that 
\begin{equation} \label{one}
 \lambda_{2}(G) = \min_{x \perp \textbf{1}} \frac{\sum_{v_i \sim_E v_j}{(x_i - x_j)^2}}{\sum_{i=1}^{n}{x_i^2}}.
 \end{equation}
Since the graph is assumed to be connected, it follows that $\lambda_{2}(G) > 0$ if and only if $G$ is connected.
Any eigenvector $\phi_2$ associated with the second smallest eigenvalue is
also known as the Fiedler vector \cite{fiedler1, fiedler2, fiedler3, gern, stone}.  The following seminal result for a graph $G$ and its eigenvector $\phi_2$ is due to Fiedler \cite{fiedler3}.

\begin{thm}[Fiedler] Let $G=(V,E)$ be a simple, undirected, unweighted, connected graph on $n$ vertices $V=\left\{v_1, \dots, v_n\right\}$. The induced subgraph of $G = (V,E)$ on $\left\{v \in V: \phi_2(v) \geq 0\right\}$ is connected.
\end{thm}
An induced subgraph $G' = (V',E')$  here means the graph with the nodes $V' = \left\{v \in V: \phi_2(v) \geq 0\right\}$, and the set of edges connecting vertices in this subset, i.e. $E' = \left\{ (i,j) \in E : i,j \in V' \right\}$.
Since $-\phi_2$ is also an eigenvector with eigenvalue $\lambda_2$, the same theorem implies that $\left\{v \in V: \phi_2(v) \leq 0\right\}$ is also connected. We assume that all eigenvectors mentioned in this paper are normalized $\|\phi_k\|_2 =1$.\\

Fiedler's theorem, together with many other desirable properties, motivates the classical spectral cut whereby the sign of $\phi_2$ is used to decompose a graph. Overall, relatively little seems to be known about the actual behavior of the Fiedler vector:
\begin{quote}
However, apart from the original results from M. Fiedler,  very few is known about the
Fiedler vector and its connection to topological properties of the underlying graph [...] (\cite{gern}, 2018)
\end{quote}

Simultaneously, these types of questions have become increasingly important in the framework of Graph Signal Processing we refer to \cite{ham, irion, or, per, saito, shu, shu2, shu3} for an introduction into recent work. Many studies of the properties of graphs use the properties that are easier to demonstrate on special families of graphs, such as {\em paths} and {\em trees}; we present a general framework which is particularly effective on trees but is also useful in a more general setting.

\subsection{The Problem} Let $G=(V,E)$ be a tree. It is compelling to interpret Equation (\ref{one}) as suggesting that $\phi_2$ is the `smoothest' vector that is orthogonal to the constants, and to further infer that the node(s) that attain the maximum (positive) value on $\phi_2$ are the furthest away from the node(s) that attain the minimum (negative) value on $\phi_2$.
This was explicitly conjectured in \cite{moo}, a counterexample was then produced by Evans \cite{evans} and is shown in Fig. 1 where maximum and minimum
are attained far away from one another but not at the two points of maximum distance from one another. A natural question that remains is to understand (1)
the behavior of extrema of the Fiedler vector and, as discussed by Lef\`evre \cite{lef} and Gernandt \& Pade \cite{gern}, (2) under which conditions such a
result might still be true.

\begin{figure}[h!]
\begin{center}
\begin{tikzpicture}
\filldraw (0,0) circle (0.07cm);
\filldraw (1,0) circle (0.07cm);
\filldraw (2,0) circle (0.07cm);
\filldraw (3,0) circle (0.07cm);
\filldraw (4,0) circle (0.07cm);
\filldraw (5,0) circle (0.07cm);
\filldraw (6,0) circle (0.07cm);
\filldraw (6,0) circle (0.07cm);
\filldraw (7,0) circle (0.07cm);
\filldraw (8,0) circle (0.07cm);
\filldraw (9,0) circle (0.07cm);
\draw [thick] (0,0) -- (9,0);
\filldraw (3,-1) circle (0.07cm);
\filldraw (2,-2) circle (0.07cm);
\filldraw (2.2,-2) circle (0.07cm);
\filldraw (2.4,-2) circle (0.07cm);
\filldraw (2.6,-2) circle (0.07cm);
\filldraw (2.8,-2) circle (0.07cm);
\filldraw (3,-2) circle (0.07cm);
\filldraw (3.2,-2) circle (0.07cm);
\filldraw (3.4,-2) circle (0.07cm);
\filldraw (3.6,-2) circle (0.07cm);
\filldraw (3.8,-2) circle (0.07cm);
\filldraw (4,-2) circle (0.07cm);
\draw [thick] (3,0) -- (3,-1);
\draw [thick] (2,-2) -- (3,-1);
\draw [thick] (2.2,-2) -- (3,-1);
\draw [thick] (2.4,-2) -- (3,-1);
\draw [thick] (2.6,-2) -- (3,-1);
\draw [thick] (2.8,-2) -- (3,-1);
\draw [thick] (3,-2) -- (3,-1);
\draw [thick] (3.2,-2) -- (3,-1);
\draw [thick] (3.4,-2) -- (3,-1);
\draw [thick] (3.6,-2) -- (3,-1);
\draw [thick] (3.8,-2) -- (3,-1);
\draw [thick] (4,-2) -- (3,-1);
\node at (3, -2.5) {minimum attained here};
\node at (9, -1) {maximum attained here};
\draw [thick, ->] (9, -0.8) -- (9, -0.3);
\end{tikzpicture}
\end{center}
\caption{The `Fiedler rose' counterexample of Evans \cite{evans}.}
\end{figure}
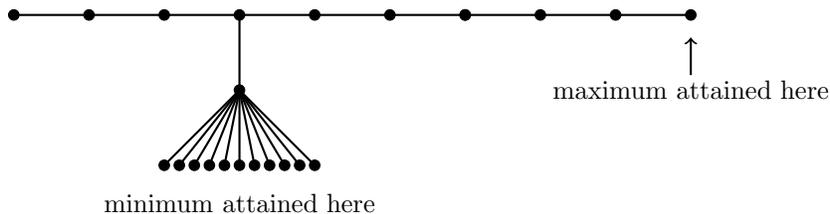

The spectral geometry of trees has attracted a lot of attention over the years: path monotonicity properties of the smallest eigenvector of the Graph Laplacian
on trees have been studied since Fiedler \cite{fiedler2}, we also refer to work of Merris \cite{merris}.  Kirkland, Neumann \& Shader \cite{kirkland} extend these results to
weighted tree (compare their Theorem 6 with our Corollary 1). Kirkland \& Neumann \cite{kirk2} discuss the effect of local graph operations on trees on the global spectrum.
Bapat, Kirkland \& Pati \cite{bapat} extend classical monotonicity results to the perturbed Laplacian (see also Kirkland \& Fallat \cite{kirk0} and Rocha \& Trevisan \cite{rocha}).
A global spectral criterion characterizing trees was given by Band \cite{band}.
We also refer to a survey of Merris \cite{merris2} and references therein.  Our paper is specifically concerned with the question of maxima and minima and is thus closer
to \cite{moo, evans, gern, lef} and our approach is more inspired by classical potential theory than algebra. We hope that our approach can also be helpful in other situations and also raise a number of questions.

\subsection{A Comment on the  Continuous Case.}
Let $\Omega \subset \mathbb{R}^2$ be a domain and consider the second smallest eigenfunction of the Laplace operator $-\Delta$ with Neumann boundary conditions, i.e. the equation
\begin{align*}
-\Delta \phi_2 &= \mu_2 \phi_2 \quad \mbox{in}~ \Omega\\
\frac{\partial \phi_2}{\partial \nu} &= 0 \quad \mbox{on}~\partial \Omega. 
\end{align*}
Rauch conjectured in 1974 that the maximum and the minimum are assumed at the boundary. This is known to fail at this level of generality \cite{b25, b4} but widely assumed to be true for convex domains $\Omega$. 
The second author \cite{stein} showed that if $x_1, x_2 \in \Omega$ satisfy 
$\|x_1 - x_2\| = \mbox{diam}(\Omega)$, then every maximum and minimum is
assumed within distance $c\cdot \mbox{inrad}(\Omega)$ of $x_1$ and $x_2$, where $\mbox{inrad}(\Omega)$ is the inradius of $\Omega$ and $c$ is a universal constant (which is the optimal scaling up to the value of $c$). Therefore, up to an inradius, the maximum and minimum are essentially assumed at maximum distance. There is no formulation of the Hot Spots conjecture on graphs (perhaps not surprising since there is no clear definition of what the boundary of a graph would be; nonetheless, similar types of phenomena do appear, see e.g. \cite{steini2} and it would be interesting to understand them better).

\section{Main Results}

We present two main results: the first is a representation formula for an eigenvector $\phi_k$ on a general graph $G$ that we find very useful in the investigation of the Fiedler vector $\phi_2$ of trees. It allows to quickly recover some of the existing results and gives a better understanding of the behavior of $\phi_2$.
In particular, it implies that for generic trees there is little to no reason to assume that the extrema of $\phi_2$ are assumed at vertices that are at distance $\mbox{diam}(G)$. However, the second contribution is an explicit application of the representation formula to construct families of graphs on which the desired statement indeed holds: the extrema of the Fiedler vector are assumed at vertices which are distance $\mbox{diam}(G)$ apart. We hope that the representation formula will also be useful in other settings.

\subsection{A Representation Formula}\label{sec:game}
Let us fix $G=(V,E)$ to be a simple, undirected, connected graph on $n$ vertices.
Let $v_{s}, v_{t}$ be two arbitrary vertices. In Algorithm \ref{alg:game} we introduce a game that results in a representation formula for any eigenvector $\phi_k$ associated with the eigenvalue $\lambda_k$.

\begin{theorem}\label{thm:1} Let  $G=(V,E)$ to be a finite, simple, undirected, connected graph. 
The expected payoff of the game in Algorithm \ref{alg:game} satisfies 
$$ \mathbb{E}\left(\emph{payoff}(v_s  \rightarrow v_t)\right) = \phi_k(v_s) - \phi_k(v_t).$$
\end{theorem}
The proof is presented in \S \ref{sec:pf:1}. 
An analogous result for the random walk normalized Laplacian $A^{}D^{-1}$ is easy to obtain (by its very nature, $AD^{-1}$ is strongly tied to random walks). Our theorem and game apply to the Kirchhoff Laplacian $L = D-A$. The game could also be interpreted as a discretized version of the Feynman-Kac formula (see e.g. \cite{taylor}).\\

\begin{algorithm}[H]\label{alg:game}
\SetAlgoLined
\KwIn{Graph $G=(V,E)$, arbitrary vertices $v_s, v_t \in V$, \\ 
eigenvalue $\lambda_k$ and a corresponding normalized eigenvector $\phi_k$ such that $(D-A)\phi_k = \lambda_k \phi_k$. }
\KwResult{payoff }
 payoff $\gets$ 0 \;
 $w \gets v_s$ \; 
\While{$w \neq v_t$}{
  payoff $\gets$ payoff $+ \lambda_{k}\cdot\phi_k(w)/\mbox{deg}(w)$\;
  neighbors($w$)  $\gets \{v \in V : (w,v) \in E \} $ \;
  $w \gets $ choose uniformly at random from neighbors($w$);
 }
 \caption{``The Game'': informally, the game is a random walk through the graph, starting at vertex $v_s$ and terminating when the vertex $v_t$ is reached. A ``payoff'' of $\lambda_{k}\cdot\phi_k(w)/\mbox{deg}(w)$ is accumulated when we visit a vertex $w$.}
\end{algorithm}
\vspace{10pt}

We believe that Theorem \ref{thm:1} has a substantial amount of explanatory power. A simple example is the following (well-known) corollary {which provides} a simple form of monotonicity of the second eigenvector along paths in a tree.  We emphasize that this type of result is not new and refer to  \cite{fiedler2, gern, lef, kirkland, merris} and references therein; however, it is well suited to illustrate an application of the main idea.

\begin{corollary}[see also \cite{fiedler2, gern, lef, kirkland, merris}] Let $G=(V,E)$ be a  simple, undirected tree.
Let $\Gamma$ be a path in the tree such that $\phi_2$ only assumes positive values on the path. If $v \neq w \in V$ are two vertices on the path and $v$ is at a greater distance than $w$ from the closest vertex where $\phi_2$ is negative, then
$$ \phi_2(v) > \phi_2(w).$$
In particular,  maxima and minima are attained in vertices with degree 1.
\end{corollary}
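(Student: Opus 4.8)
The plan is to read the inequality directly off the representation formula. I would apply the Theorem above with $k=2$, $v_s=v$ and $v_t=w$, which gives
$$ \phi_2(v)-\phi_2(w)=\lambda_2\,\mathbb{E}\!\left(\sum_{j\,:\,X_j\neq w}\frac{\phi_2(X_j)}{\deg(X_j)}\right), $$
where $X_0=v,X_1,X_2,\dots$ is the random walk from the game, stopped the instant it first reaches $w$. Since $\lambda_2>0$, it suffices to prove that the expected sum on the right is strictly positive. The first structural input is that $G$ is a tree: the vertex $w$ is a cut vertex, so deleting it splits $G$ into the subtree $T_v$ containing $v$ and the remaining components. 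Because every path leaving $T_v$ must pass through $w$, and the walk is absorbed as soon as it reaches $w$, every vertex $X_j$ appearing in the sum lies in $T_v$.

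The heart of the argument is then the claim that $\phi_2>0$ on all of $T_v$; granting this, every summand is positive, the expectation is positive, and $\phi_2(v)>\phi_2(w)$ follows. To prove the claim I would combine Fiedler's theorem with the distance hypothesis. By Fiedler's theorem the set $P=\{u:\phi_2(u)\ge 0\}$ is connected, hence convex in the tree, so that for any $u\in P$ the distance to $M=\{u:\phi_2(u)<0\}$ is realised by a path staying in $P$ until one final step into $M$. The assumption that $v$ lies strictly farther from $M$ than $w$, together with the fact that $v$ and $w$ both sit on the path $\Gamma$ along which $\phi_2>0$, should force $w$ to separate $v$ from $M$, i.e. every path from $v$ to a negative vertex runs through $w$; equivalently $T_v\cap M=\emptyset$, which is exactly the claim. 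Making this separation rigorous is the main obstacle. The distance inequality is spectral rather than purely combinatorial in origin, and the clean target is to show that a negative vertex lying inside $T_v$ would place some element of $M$ at least as close to $v$ as to $w$, contradicting $\mathrm{dist}(v,M)>\mathrm{dist}(w,M)$; convexity of $P$ is what controls the relevant tree distances in this comparison.

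Finally, the monotonicity yields the statement about leaves. If $\phi_2$ attained its maximum at a vertex $w$ of degree at least two, then $w$ cannot be adjacent to a negative vertex in every direction without having very small distance to $M$, so it would possess a neighbour $u$ that is strictly farther from $M$; the monotonicity just established would then give $\phi_2(u)>\phi_2(w)$, contradicting maximality. Hence the maximum is attained at a vertex of degree $1$. The minimum is handled identically by applying the whole argument to $-\phi_2$, which is again a second eigenvector and whose non-negative set $\{u:\phi_2(u)\le 0\}$ is likewise connected by Fiedler's theorem.
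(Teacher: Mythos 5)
Your argument is the same as the paper's: play the game of Theorem 1 starting at the far vertex $v$ and stopping at $w$, observe that the walk is confined to the component $T_v$ of $G\setminus\{w\}$ containing $v$, and conclude that every increment of the payoff is nonnegative while the very first one (at $v\in\Gamma$, where $\phi_2(v)>0$) is strictly positive, whence $\phi_2(v)-\phi_2(w)=\mathbb{E}(\text{payoff})>0$. The paper's proof is a two-line version of exactly this, and your filling-in of the confinement step (cut vertex of a tree, Fiedler's theorem, connectedness of the negative set) is correct and more careful than the original. The deduction of the leaf statement and the reduction of the minimum to $-\phi_2$ are also fine; note only that you need $\phi_2\ge 0$ on $T_v$ plus strict positivity at the starting vertex, not strict positivity everywhere, so the possible zeros of $\phi_2$ cause no trouble.

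The step you flag as ``the main obstacle'' is, however, genuinely unavailable in the generality in which you pose it: from $\mathrm{dist}(v,M)>\mathrm{dist}(w,M)$ alone one cannot conclude that $w$ separates $v$ from $M$. Let $z$ be the vertex of the $v$--$w$ geodesic at which the path to the negative set attaches. If $z=w$, the separation is immediate and your proof closes. But if $z$ is an interior vertex of that geodesic --- for instance $v_1\sim a$, $a\sim b\sim v$, $a\sim w$, so that $\mathrm{dist}(v,v_1)=3>2=\mathrm{dist}(w,v_1)$ while the path from $v$ to $v_1$ avoids $w$ --- then $v_1\in T_v$, the stopped walk can visit negative vertices, and the assertion $T_v\cap M=\emptyset$ that you are trying to establish is simply false; no refinement of the convexity argument will produce it. The honest reading of the corollary, and the only configuration the paper's own proof actually treats (see its figure, where $v_2$ lies on the path from $v_3$ to $v_1$), is that $w$ lies on the geodesic from $v$ to the negative region; there the separation is trivial and your argument is complete. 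So your route is the paper's route and is complete exactly where the paper's is; the obstacle you identified is real, and it marks the boundary of what this game argument proves rather than a gap you could have closed.
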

\begin{proof} The proof is a relatively short application of Theorem 1 and illustrates it well which is why we discuss it here. Let $z$ be the vertex where $\phi_2$ is negative or zero but where $\phi_2$ is positive for one of the neighbors. By Fiedler's theorem (with change of signs) and because trees have no cycles, that vertex is unique. Let us now assume $w$ and $v$ are vertices on a path and $v$ is from a greater distance from $z$ than $w$.

\begin{figure}[h!]
\begin{center}
\begin{tikzpicture}[scale=1]
\filldraw (0,0) circle (0.06cm);
\filldraw (1,0) circle (0.06cm);
\filldraw (2,1) circle (0.06cm);
\filldraw (2,0) circle (0.06cm);
\filldraw (2,-1) circle (0.06cm);
\filldraw (3,1) circle (0.06cm);
\draw [very thick] (0,0) -- (1,0);
\draw [very thick] (1,0) -- (2,-1);
\draw [very thick] (1,0) -- (2,0);
\draw [very thick, dashed] (2,-1) -- (3,-1.5);
\draw [very thick] (1,0) -- (2,1);
\draw [very thick] (2,1) -- (3,1);
\draw [very thick, dashed] (0,0) -- (-1,0);
\draw [very thick, dashed] (2,0) -- (3,0);
\draw [very thick, dashed] (3,1) -- (4,1);
\node at (0, -0.3) {\Large $z$};
\node at (2, 1.3) {\Large $w$};
\node at (3, 1.3) {\Large $v$};
\end{tikzpicture}
\end{center}
\label{fig:game1}
\caption{Sketch: proof of Corollary 1.}
\end{figure}

We consider an instance of the game starting at $v_s=v$ and terminating at $v_t=z$ assuming a local geometry as in Figure 2.
By Theorem \ref{thm:1}, the expected value of this game is 
\begin{equation}\mathbb{E}\left(\mbox{payoff}(v \rightarrow z)\right) = \phi_k(v) - \phi_k(z).
\end{equation}
Suppose that the sequence of traversed vertices is $$ v=a_0,a_1,a_2,...,a_{j-1},a_j,a_{j+1},...,a_{m},z$$ (where the number of steps $m+1$ is the number of steps in this particular instance). We note that the sequence of vertices must include $w$ which lies on the only path from $v$ to $z$ since $G$ is a tree ($w$ and $v$ may appear in the sequence multiple times). 
We denote by $j$ the first time the game visits $w$ such that $a_j = w$.
By construction, the Fiedler eigenvector is non-negative at every step on the tree $\phi_2(a_i) \geq 0$ before the game is terminated upon arriving to the first negative node $z$. Therefore, at each step of the sequence we accumulate a non-negative payoff $\lambda_{k}\cdot\phi_k(w)/\mbox{deg}(w) \geq 0$.
It follows that 
\begin{equation}\mathbb{E}\left(\mbox{payoff}(v \rightarrow z)\right) = \phi_k(v) - \phi_k(z) > 0.
\end{equation}

What happens if we truncate the instance of the game when we reach $w$? We obtain the truncated sequence $v=a_0,a_1,a_2,...,a_{j-1},a_j=w$. 
The truncated sequences are instances of the ``truncated'' game starting at $v_s=v$ and terminating at $v_t=w$ (and by construction, their distribution is the same as that of sequences from the truncated game). Therefore, by Theorem \ref{thm:1}, the expected payoff is:
\begin{equation}\mathbb{E}\left(\mbox{payoff}(v \rightarrow w)\right) = \phi_k(v) - \phi_k(w).
\end{equation}
The truncated sequence is strictly shorter than the original sequence and therefore collects at most the same payoff. Therefore:
\begin{equation}\phi_k(v) - \phi_k(z) = \mathbb{E}\left(\mbox{payoff}(v \rightarrow w)\right) \geq  \mathbb{E}\left(\mbox{payoff}(v \rightarrow w)\right) > 0.
\end{equation}
\end{proof}

\subsection{Counterexamples to the ``longest path hypothesis''.} We now return to the original question from \cite{moo}: whether the second eigenvector assumes maximum and minimum at the endpoints of the longest path in the tree.
This was disproven by Evans \cite{evans} by means of an explicit example. The question has also been studied in \cite{gern, lef}. The purpose of this subsection is to argue that the representation formula from Theorem 1 allows to heuristically explain why, generally, there is no reason the second eigenvector should assume extreme values at the endpoints of the longest path -- it shows that Evans' counterexample is actually representative of one of the main driving forces behind localization of large values of the Fiedler vector in sub-structures. We are not making any precise claims at this point in the paper, however, this section tries to provide a good working heuristic that (a) allows to construct counterexamples quite easily and (b) will underlie all our formal arguments later in the paper. It also seems that these ideas could be made precise in more than one way.

One of the crucial ingredients in the representation formula is the number of steps a typical random walk will need to reach another vertex: if the tree has a complicated structure (say, many vertices with large degree), then it will take a very long time for a random walk to reach a specific vertex (this principle is already embodied in Evans' counterexample \cite{evans} shown in Figure 1): distance is not as crucial as complexity -- this immediately implies a large family of counterexamples whose type is shown in Figure \ref{fig:1over4}: we consider a graph composed of a path of length $d$ attached to a tree $T$ at vertex $d/4$. We assume the tree $T$ has diameter much smaller than $d/4$ but has vertices of very large degree.

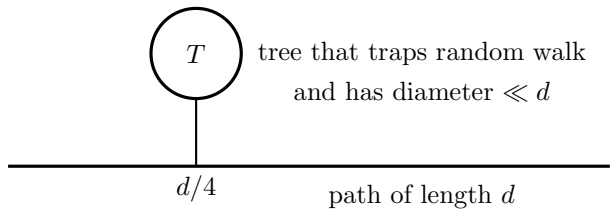
\begin{figure}[h!]
\begin{center}
\begin{tikzpicture}
\draw [very thick] (0,0) -- (8,0);
\draw[very thick] (2.5,1.5) circle (0.6cm);
\draw [thick] (2.5,0) -- (2.5,0.9);
\node at (2.5, 1.5) {$T$};
\node at (5.5, -0.4) {path of length $d$};
\node at (2.5, -0.3) {$ d/4$};
\node at (5.5, 1.5) {tree that traps random walk};
\node at (5.5, 1) {and has diameter $\ll d$};
\end{tikzpicture}
\end{center}
\caption{A generic counterexample.\label{fig:1over4}}
\end{figure}

The game then suggests that, if the tree has vertices of sufficiently large degree, one extremum is at the `most remote' part off the path in the tree $T$ -- in particular, one of the two extrema would not be on the path and thus not at the endpoints of the longest path in the graph. The distance between the extrema would be
$$ d(v_{\max}, v_{\min}) \leq \frac{3d}{4} + \mbox{diam}(T) < d.$$
A sketch of the argument to show that this type of construction works is as follows. There are two cases: either the sign change of the second eigenvector happens inside $T$ or it happens on the path. If the diameter of $T$ is sufficiently small compared to $d$, known inequalities on the eigenvalue $\lambda_2$ (which we use below) suggest that the first case cannot occur. This means that the sign change happens on the path. If the value of the eigenvector in the vertex $v$ that connects to $T$ is nonzero, we can play the game with vertices starting in $T$ and ending in $v$. Corollary 1 shows that the values of the eigenfunction inside $T$ are (in absolute value) at least as big as the value in $v$. Then the game leads to a nonzero contribution for each step of the random walk that is not arbitrarily small. This means that in order to ensure large (absolute) values inside the tree, the quantity to maximize is the expected number of steps in the game -- this, in turn, can be achieved by having vertices of large degree. We emphasize that this heuristic is non-rigorous but quickly motivates the construction of many counterexamples. All the positive results in our paper can be understood as ensuring the absence of such a structure.\\

\begin{figure}[h!]
\begin{subfigure}[b]{0.7\textwidth}
\begin{center}
\begin{tikzpicture}[scale=0.8]
\draw [very thick] (0,0) -- (8,0);
\draw [thick] (4,0) -- (4,2);
\node at (6.5, -0.4) {path of length $d$};
\node at (4, -0.4) {$d/2$};
\node at (5.5, 1) {height $\sim d/3$};
\end{tikzpicture}
\end{center}
\caption{Baseline graph: extrema are assumed at the end of the long path (not a counterexample).\label{fig:1over2a}}
\end{subfigure}
\begin{subfigure}[b]{0.7\textwidth}
\begin{center}
\begin{tikzpicture}[scale=0.8]
\draw [very thick] (0,0) -- (8,0);
\draw [thick] (4,0) -- (4,2);
\node at (6.5, -0.4) {path of length $d$};
\node at (4, -0.4) {$d/2$};
\node at (5.5, 1) {height $\sim d/3$};
\draw [thick] (4,1) -- (4.2, 1.2);
\draw [thick] (4,1) -- (4-0.2, 1.2);
\draw [thick] (4,1) -- (4-0.2, 0.8);
\draw [thick] (4,1.5) -- (4.2, 1.7);
\draw [thick] (4,1.5) -- (4-0.2, 1.3);
\draw [thick] (4,0.7) -- (4.2, 0.7);
\draw [thick] (4,0.7) -- (3.8, 0.5);
\draw [thick] (4,0.7) -- (3.8, 0.7);
\end{tikzpicture}
\end{center}
\caption{Adding little trees to the path in the middle leads to a counterexample.\label{fig:1over2b}}
\end{subfigure}
\caption{Constructing another type of counterexample.\label{fig:1over2}}
\end{figure}

To build further intuition, we quickly sketch another type of counterexample in Figure \ref{fig:1over2}. Take a path graph of length $d$ and
add a path graph of length $d/3$ to the middle vertex as in Figure \ref{fig:1over2a}. What we observe is that the eigenvector changes along the
long path, that it assumes extrema at its end and that the eigenvector is small and changes slowly on the little path in the middle.
However, if we start adding paths of length 1 to the vertices of the short path in the middle (or short trees, even ones with bounded
diameter) as in Figure  \ref{fig:1over2b}, then after a while the eigenvector flips and assumes an extremum at the tip of short path in the middle.
Perhaps the main contribution of our paper is a framework that clearly establishes \textit{why} this happens. The
theorems we give are one way of capturing the phenomenon but presumably there are many other possible formulations
that could be proven by formalizing the same kind of mechanism that we use here.
A particular consequence of these ideas is that a \textit{typical} tree (say, chosen with respect to the uniform measure on the set of all connected trees of a fixed size) should not have the desired property of $\phi_2$ assuming its extrema
at the endpoints of a path of length $\mbox{diam}(G)$. This appears to be an interesting problem. We refer to numerical work done by Lef\`evre \cite{lef} showing that all trees
with $n \leq 11$ vertices do have the property but already $2\%$ of trees with $n=20$ vertices do not. Lef\`evre specifically asks
whether a typical tree on $n$ vertices does not have the property as $n$ becomes large and we also consider this to be an interesting
problem.

\subsection{An Admissible Class.}
The purpose of this section is to construct a large family of tree-like graphs for which the following statement is true: the second eigenvector of the graph Laplacian does indeed assume maximum and minimum at the endpoints of the longest path (which, for graphs of this type, will be unique). 

We begin our construction with a class of graphs to which we will refer here as {\em augmented path graphs}; we will denote the class by $\mathcal{P}$. Augmented path graphs are paths with subgraphs attached to them as illustrated in Figure \ref{fig:bubble}.

\begin{definition}[augmented path graph $\mathcal{P}$]\label{def:path-based-graph}
An augmented path graphs $G=(V,E) \in \mathcal{P}$ of diameter $d=\mbox{diam}(G)$ is a simple, undirected, unweighted, connected graph of diameter $d$ that can be written as the union the following subgraphs:
\begin{enumerate}
\item A path of length $d$, denoted by $G_p = (V_p, E_p)$, with vertices $\{V_p(1),...,V_p(d)\}$.
\item For each $1 \leq k \leq d$, zero or more subgraphs denoted $G_{k,i}=(V_{k,i},E_{k,i})$, each of them connected and has at least two vertices $V_{k,i} = \{V_{k,i}(0),V_{k,i}(1),...\}$, such that $V_{k,i}(0)=V_p(k)$ is a vertex on the path $G_p$ and $G_{k,i}\backslash V_{k,i}(0)$ is also a connected subgraph.
\end{enumerate}
Furthermore, $V_{k,i}\backslash V_{k,i}(0)$ and $V_{k',i'}\backslash V_{k',i'}(0)$  are disjoint sets (unless $k=k'$ and $i=i'$) with no edges in $E$ connecting them.
\end{definition}

In other words, the subgraphs $G_{k,i}$ are connected only through vertices on the path $G_p$. 
The only requirement at this point is that the attached graphs do not increase the overall diameter of the graph beyond $d=\mbox{diam}(G)$.

\begin{figure}[h!]
\begin{center}
\begin{tikzpicture}[scale=1]
\filldraw (0,0) circle (0.06cm);
\filldraw (1,0) circle (0.06cm);
\filldraw (2,0) circle (0.06cm);
\filldraw (3,0) circle (0.06cm);
\filldraw (4,0) circle (0.06cm);
\filldraw (5,0) circle (0.06cm);
\filldraw (6,0) circle (0.06cm);
\filldraw (7,0) circle (0.06cm);
\filldraw (8,0) circle (0.06cm);
\filldraw (9,0) circle (0.06cm);
\filldraw (9,0) circle (0.06cm);
\draw [very thick] (0,0) -- (4,0);
\draw [very thick, dashed] (4,0) -- (6,0);
\draw [very thick] (6,0) -- (9,0);
\node at (0, -0.5) {1};
\node at (1, -0.5) {2};
\node at (9,-0.5) {$\mbox{diam}(G)$};
\node at (5,-0.5) {$k$};
\draw [thick] (5,0) to[out=60, in=270] (5.5, 1) to[out=90, in=0] (5, 2) to[out=180, in=90] (4.5, 1) to[out=270, in=120] (5, 0);
\node at (5,1) {$G_{k,1}$};
\draw [thick] (5,0) to[out=30, in=270] (6.5,1) to[out=90, in=0] (6,2) to[out=180, in=90] (5.8, 1.5) to[out=270, in=40] (5,0);
\node at (6.1,1) {$G_{k,2}$};
\end{tikzpicture}
\end{center}
\caption{Augmented path graphs: a long path whose attached graphs are connected to exactly one vertex on the path and do not have any connections between them.}
\label{fig:bubble}
\end{figure}
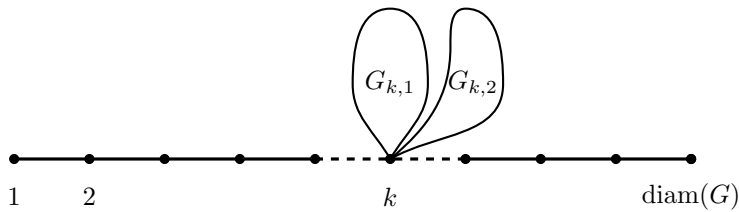

We will now assign to each such subgraph $G_{k,i}$ a natural quantity: for any vertex $v \in G_{k,i}$, we can consider a random walk started in $v$ that jumps uniformly at random to an adjacent vertex until it hits the path at $G_{k,i}(0)= G_p(k)$. We can then, for each such vertex $v$, compute the expected {\em hitting time}.
\begin{definition}
The hitting time $\mbox{hit}(\tilde{G},v_s,v_t)$ is the expected number of random walk steps on the graph $\tilde{G}$ to get from $v_s$ to $v_t$ for the first time.
\end{definition}
With a slight abuse of terminology, we define a hitting time $\mbox{hit}(\tilde{G},v_t)$ for a terminal point $v_t$ as the maximum over starting points in a graph: $$ \mbox{hit}(\tilde{G},v_t) = \max_{v_s \in \tilde{G}} \mbox{hit} (\tilde{G},v_s,v_t) . $$
Finally, we define a hitting time for our construction. 
In the context of this section, we are always interested in the terminal vertex $v_t=G_{k,i}(0)= G_p(k)$ being on the path, and it is therefore convenient to absorb the terminal node in a succinct notation
\begin{equation}\label{eq:path:time}
\mbox{hit}({G_{k,i}}) = \mbox{hit}(G_{k,i},G_{k,i}(0))
\end{equation}
In other words, the hitting time $\mbox{hit}({G_{k,i}})$ of the subgraph ${G_{k,i}}$ in our construction in this section is the maximum expected time to get to the path from a vertex in the subgraph ${G_{k,i}}$. 
This quantity captures an important aspect of the underlying dynamics. We refer to \cite{xiu} where the same quantity has been used in a similar way. We observe that if $G_{k,i}$ is itself a path, then 
\begin{equation*}
  \mbox{hit}(G_{k,i})  \sim \mbox{diam}(G_{k,i})^2.
\end{equation*}

 This scaling follows from observing that the problem is structurally similar to a random walk on the lattice $\mathbb{Z}$ and that the standard random walk on $\mathbb{Z}$
 after $\ell$ random steps has variance $\ell$ (and thus standard deviation $\sim \sqrt{\ell}$; see also \cite{blum}) . \\
 
 We can now state the main result of the section. We refer to Figure \ref{fig:bubble} for a sketch of what these graphs look like.

\begin{theorem} 
Let $G$ be an augmented path graph (see Definition \ref{def:path-based-graph}) with subgraphs $G_{k,i}$ attached to a path $G_p$ of length $\emph{diam}(G)$. 
Suppose that each $G_{k,i}$ satisfies
\begin{enumerate}
\item the attached subgraph $G_{k,i}$ does not have too many vertices:
$$  |G_{k,i}|   \leq \frac{\emph{diam}(G)}{32}.$$
\item and the hitting time (equation (\ref{eq:path:time})) is not too large:
$$  \emph{hit}(G_{k,i}) \leq   \frac{1}{50}  \min\left\{ k, \emph{diam}(G) - k \right\}^{2}.$$
\end{enumerate}
Then any eigenvector corresponding to the second eigenvalue of the graph Laplacian of $G$ assumes its extrema at the endpoints of the long path.
\end{theorem}
The proof is provided in \S 3.3. 
We point out that it would be of interest to obtain inverse results: explicit conditions under which one of the extrema is not attained at the endpoints of the path.\\

We give a sample application of Theorem 2 to Evans' counterexample and consider what he called the Fiedler rose (see Figure \ref{fig:fiedlerrose}): let $G$ denote the Fiedler rose with $n+2$ vertices. If we start in an outermost vertex (one of the vertices at the bottom of Figure \ref{fig:fiedlerrose}), then one step of the random walk leads to the center and the next step leads to the path with likelihood $p=1/(n+1)$. This means that the expected number of steps required until one hits the path is
\begin{align*}
\mbox{hit}(\mbox{Rose}) &= 2 \frac{1}{n+1} + 4 \frac{1}{n+1} \left(1- \frac{1}{n+1}\right) + 6 \frac{1}{n+1} \left(1- \frac{1}{n+1}\right) ^2 + \dots\\
&= \frac{2}{n+1} \sum_{k=0}^{\infty} (k+1) \left(1- \frac{1}{n+1}\right) ^k = 2n +2.
\end{align*}

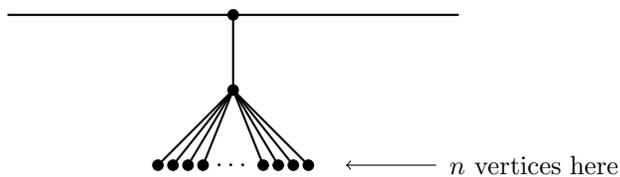
\begin{figure}[h!] 
\begin{center}
\begin{tikzpicture}
\filldraw (3,0) circle (0.07cm);
\draw [thick] (0,0) -- (6,0);
\filldraw (3,-1) circle (0.07cm);
\filldraw (2,-2) circle (0.07cm);
\filldraw (2.2,-2) circle (0.07cm);
\filldraw (2.4,-2) circle (0.07cm);
\filldraw (2.6,-2) circle (0.07cm);
\node at (3,-2) {$\dots$};
\filldraw (3.4,-2) circle (0.07cm);
\filldraw (3.6,-2) circle (0.07cm);
\filldraw (3.8,-2) circle (0.07cm);
\filldraw (4,-2) circle (0.07cm);
\draw [thick] (3,0) -- (3,-1);
\draw [thick] (2,-2) -- (3,-1);
\draw [thick] (2.2,-2) -- (3,-1);
\draw [thick] (2.4,-2) -- (3,-1);
\draw [thick] (2.6,-2) -- (3,-1);
\draw [thick] (3.4,-2) -- (3,-1);
\draw [thick] (3.6,-2) -- (3,-1);
\draw [thick] (3.8,-2) -- (3,-1);
\draw [thick] (4,-2) -- (3,-1);
\node at (7, -2) {$n$ vertices here};
\draw [->] (5.7,-2) -- (4.5, -2);
\end{tikzpicture}
\end{center}
\caption{The `Fiedler rose' counterexample of Evans \cite{evans}.\label{fig:fiedlerrose} }
\end{figure}

This means that if we have a path graph of length $\mbox{diam}(G)$ and attach a Fiedler rose with $n$ vertices to
the middle point of the path graph, then the rose can have up to $n \leq \mbox{diam}(G)/120$ vertices without violating
the result. More precise asymptotics for this special case were given by Lef\`evre \cite{lef}.

\subsection{A Hitting Time Bound.} The purpose of this section is to establish bounds on hitting times under an assumption on the maximum degree. Let $G$ be a connected graph and assume that vertex $v_1$ is marked (in the setting above, $v_1$ is the vertex that lies on the long path). We are interested in obtaining upper bounds on the expected hitting time $\mbox{hit}(G, v_1)$. 
Evans constructed his counterexample using the maximum degree of $G$; the following proposition provides a bound on the hitting time as a function of the maximum degree.

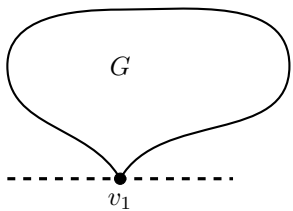
\begin{figure}[h!]
\begin{center}
\begin{tikzpicture}[scale=1.5]
\draw [very thick, dashed] (4,0) -- (6,0);
\node at (5,-0.2) {$v_1$};
\draw [thick] (5,0) to[out=60, in=270] (6.5, 1) to[out=90, in=0] (5, 1.5) to[out=180, in=90] (4, 1) to[out=270, in=120] (5, 0);
\node at (5,1) {$G$};
\filldraw (5,0) circle (0.05cm);
\end{tikzpicture}
\end{center}
\caption{A connected graph $G$ with a marked vertex.}
\label{fig:singlebubble}
\end{figure}
\begin{proposition} Let $G$ be a connected graph with maximal degree $\Delta  = \max_{v \in G}~ \emph{deg}(v)$ and a marked vertex $v_1$. The maximum expected time of a random walk started in a vertex in $G$ until it hits $v_1$ can be bounded from above by
  $$  \emph{hit}(G,v_1)  \leq \emph{diam}(G) \cdot  \Delta^{{\tiny \emph{diam}(G)}}.$$
\end{proposition}

The proof of is presented in \S \ref{pf:prop}.
We can apply the proposition in combination with Theorem 2 to construct a fairly general family of graphs for which the maximum is indeed assumed at the endpoints of the longest path. These graphs are a subset of the augmented path graphs (Definition \ref{def:path-based-graph}) illustrated in Figure \ref{fig:bubble}: there is a long underlying path $G_p$ of length $\mbox{diam}(G)$ whose vertices are enumerated by $\left\{1,2,\dots,\mbox{diam}(G) \right\}$.  If $k$ is a vertex on the path, then we are allowed to attach an arbitrary number of graphs
$G_{k,1}, G_{k,2}, \dots$ to the vertex as long as the only edges between $G_{k,i}$ and the path are adjacent to the vertex $k$. Moreover, no edges between $G_{k,i}$ and $G_{k,j}$ or more, generally, $G_{i,k}$ and $G_{k,\ell}$, are allowed (Figure \ref{fig:entirefamily}).

\begin{figure}[h!]
\begin{center}
\begin{tikzpicture}[scale=1]
\filldraw (0,0) circle (0.06cm);
\filldraw (1,0) circle (0.06cm);
\filldraw (2,0) circle (0.06cm);
\filldraw (3,0) circle (0.06cm);
\filldraw (4,0) circle (0.06cm);
\filldraw (5,0) circle (0.06cm);
\filldraw (6,0) circle (0.06cm);
\filldraw (7,0) circle (0.06cm);
\filldraw (8,0) circle (0.06cm);
\filldraw (9,0) circle (0.06cm);
\filldraw (9,0) circle (0.06cm);
\draw [very thick] (0,0) -- (4,0);
\draw [very thick, dashed] (4,0) -- (8,0);
\draw [very thick] (8,0) -- (9,0);
\node at (0, -0.5) {1};
\node at (1, -0.5) {2};
\node at (9,-0.5) {$\mbox{diam}(G)$};
\node at (5,-0.5) {$k$};
\draw [thick] (5,0) to[out=60, in=270] (5.5, 1) to[out=90, in=0] (5, 2) to[out=180, in=90] (4.5, 1) to[out=270, in=120] (5, 0);
\node at (5,1) {$G_{k,1}$};
\draw [thick] (5,0) to[out=30, in=270] (6.5,1) to[out=90, in=0] (6,2) to[out=180, in=90] (5.8, 1.5) to[out=270, in=40] (5,0);
\node at (6.1,1) {$G_{k,2}$};
\draw [thick] (7,0) to[out=30, in=270] (7.5,0.5) to[out=90, in=0] (7.3,1) to[out=180, in=90] (6.8, 0.8) to[out=270, in=40] (7,0);
\node at (7.2,0.6) {$_{G_{k',2}}$};
\node at (7,-0.5) {$k'$};
\end{tikzpicture}
\end{center}
\caption{An explicit admissible family of graphs: as above, we are allowed to attach graphs $G_{k,i}$ to the vertex $i$ as long as the diameter
of $G_{k,i}$ is short compared to the maximal degree of $G_{k,i}$.}
\label{fig:entirefamily}
\end{figure}
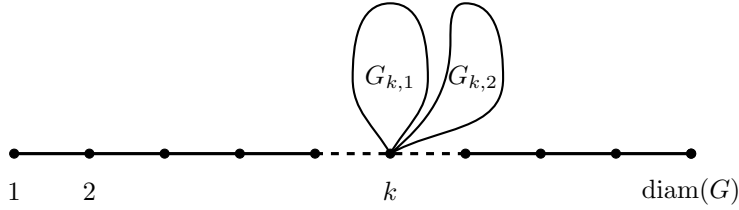

Theorem 2 gives us concrete conditions under which the maximum and minimum are assumed at endpoints of the longest path, these are
\begin{enumerate}
\item $  |G_{k,i}|   \leq (1/32) \cdot \mbox{diam}(G)$
\item 
$  \mbox{hit}(G_{k,i}) \leq   (1/50) \cdot  \min\left\{ k, \mbox{diam}(G) - k \right\}^{2}.$
\end{enumerate}
The second condition may be difficult to check and this is where the Proposition comes into play. If we denote the maximal degree of $G_{k,i}$ by $\Delta_{k,i}$ and the the maximal degree of $G$ by $\Delta$, then
we can use the Proposition to bound
$$ \mbox{hit}(G_{k,i}) \leq \mbox{diam}(G_{k,i}) \cdot  \Delta_{k,i}^{{\tiny \mbox{diam}(G_{k,i})}}  \leq \mbox{diam}(G_{k,i}) \cdot  \Delta^{{\tiny \mbox{diam}(G_{k,i})}}  .$$

To ensure that Condition 2 holds, it thus suffices to assume that
$$  \mbox{diam}(G_{k,i}) \cdot  \Delta^{{\tiny \mbox{diam}(G_{k,i})}} \leq \frac{1}{50} \cdot  \min\left\{ k, \mbox{diam}(G) - k \right\}^{2}.$$
In the middle of the path, $k \sim \mbox{diam}(G)/2$, this shows that 
the attached $G_{k,i}$ in the middle of the path can be any arbitrary subgraph
as long its maximum degree is bounded by $\Delta$ and 
$$  \mbox{diam}(G_{k,i}) \leq  c_{\Delta} \log \mbox{diam}(G),$$
where the constant $c_{\Delta}$ depends only on $\Delta$ -- this suffices to ensure Condition 2 to be satisfied.
This may, at first glance, seem like a rather restricted result: the subgraph  $G_{k,i}$ may be arbitrary as long as its diameter is short. In light of Evans' counterexample, this estimate is perhaps not surprising (one can attach Fiedler roses on top of Fiedler roses on top of Fiedler roses etc. to the desired effect). However, we also point out that if the subgraph  $G_{k,i}$ does not have a `labyrinth-' type structure where random walkers can easily get lost (in the sense of hitting time being large), then one could attach subgraphs of larger diameter without violating the conditions of Theorem 2. This will be investigated in \S 2.5.

\subsection{Caterpillar graphs.} We conclude with a simple example: a caterpillar graph \cite{cat2, cat1} is path of length $n$ where to each vertex we may add trees of size 1 (alternatively: after removing all vertices of degree 1, a path graph remains).  Gernandt \& Pade \cite{gern} proved that the extrema of the second eigenvector are assumed at the endpoints of the longest path and established various generalizations of this result. We give another such result which will follow quickly from Theorem 2.

\begin{corollary} Let $G_p=P_n$ be a path with vertices $1, 2, \dots, n$. Suppose we attach to the vertex $k$
an arbitrary number of paths of length at most $f(k)$, where
$$ f(k) \leq  \frac{1}{20}  \min\left\{ k, n-k\right\}^{}.$$
Then the global extrema are assumed at the endpoints of the longest path.
\end{corollary}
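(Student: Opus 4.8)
The plan is to obtain the statement as a direct application of Theorem 2: I will check that, with $\mathrm{diam}(G) = n$, every pendant path attached to the main path satisfies both quantitative hypotheses of that theorem, after which the conclusion is immediate. The two steps that require actual work are (i) verifying that the attachments do not create a path longer than the original one, so that $\mathrm{diam}(G) = n$, and (ii) computing the hitting time of a pendant path exactly; everything else is a comparison of constants.

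For step (i), note that any geodesic leaving the main path ends at the tip of some attached path, and the only way to exceed length $n$ is to connect two such tips through the main path. If paths of lengths $f(k_1), f(k_2)$ are attached at $k_1 < k_2$, the resulting tip-to-tip path has length
\[
 f(k_1) + (k_2 - k_1) + f(k_2) \le \tfrac{1}{20}k_1 + (k_2 - k_1) + \tfrac{1}{20}(n - k_2) = (k_2 - k_1) + \tfrac{1}{20}\big(n - (k_2 - k_1)\big),
\]
where I used $\min\{k_1, n-k_1\} \le k_1$ and $\min\{k_2, n-k_2\} \le n - k_2$. Since $k_2 - k_1 \le n - 1$, the right-hand side is at most $n - \tfrac{19}{20} < n$, and the single-attachment case is easier. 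Thus no attachment beats the main path and $\mathrm{diam}(G) = n$. This is precisely the point that forces $f$ to have the shape $\min\{k, n-k\}$: a long path attached close to an endpoint would otherwise protrude past that endpoint.

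For step (ii), consider a pendant path of length $\ell \le f(k)$ and label its vertices $1, \dots, \ell$ by their distance from the connection vertex $k$, which plays the role of the absorbing state $0$. The expected hitting time $h(j)$ to $0$ satisfies $h(0) = 0$, the interior relation $h(j) = 1 + \tfrac{1}{2}h(j-1) + \tfrac{1}{2}h(j+1)$, and the reflecting relation $h(\ell) = 1 + h(\ell - 1)$ at the tip. Solving this recurrence gives $h(j) = j(2\ell - j)$, which is maximized at the tip, so $\mathrm{hit}(G_{k,i}) = h(\ell) = \ell^2 \le f(k)^2$. This is the sharp form of the scaling \eqref{eq:path:time}, and it is unaffected by the presence of several pendant paths at the same vertex, since the walk is absorbed the instant it reaches $k$.

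It remains to verify the two hypotheses of Theorem 2 with $\mathrm{diam}(G) = n$. For the size condition, $|G_{k,i}| = \ell \le f(k) \le \tfrac{1}{20}\cdot\tfrac{n}{2} = \tfrac{n}{40} < \tfrac{\mathrm{diam}(G)}{32}$, using $\min\{k, n-k\} \le n/2$. For the hitting-time condition, $\mathrm{hit}(G_{k,i}) \le f(k)^2 \le \tfrac{1}{400}\min\{k, n-k\}^2 < \tfrac{1}{50}\min\{k, \mathrm{diam}(G) - k\}^2$. Both hold with room to spare (the size condition being the most restrictive of the three, which is what pins down the constant $\tfrac{1}{20}$), so Theorem 2 yields that the global extrema of $\phi_2$ are attained at the endpoints of the main path. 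I expect step (i) to be the only genuinely delicate point, since it is where one must rule out the degenerate configurations — long attachments near the ends, or two long attachments on the same side — that the form of $f$ is designed to exclude.
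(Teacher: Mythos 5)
Your proposal is correct and follows exactly the route the paper intends: the paper offers no written proof beyond the remark that the Corollary ``follows almost immediately from Theorem 2 and the behavior of hitting times for path graphs,'' and you have simply filled in the details (the check that the main path remains the longest path, the exact pendant-path hitting time $\ell^2$ consistent with the paper's $\mathrm{hit}(P_k)=(k-1)^2$, and the comparison of constants). No discrepancies with the paper's argument.
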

Stronger results should be true, maybe even
$f(k) = \min\left\{k, n-k\right\} - 1$. 

\subsection{A Hitting Time Problem.} 
An interesting question is the following: suppose $G=(V,E)$ is a connected graph with a marked vertex $v_1$ and $h:V \rightarrow \mathbb{R}$
is a function such that $h(v)$ the expected number of steps a random walk started in $v$ takes until it hits $v_1$. What bounds (both from above and from below) can be proven
on 
$$ \mbox{hit}_{v_1}(G) = \max_{v \in V}{h(v)} ?$$
A trivial bound is 
$$ \mbox{hit}_{v_1}(G) \geq \max_{v \in V}{ d(v,v_1)}.$$
Amusingly, this might be close to optimal. Fix a degree $\Delta$ and consider the following type of graph where each vertex has the maximal number of children ($\Delta -1$) up to a certain level. Let us then connect all the vertices in the last level to the root of the tree. The induced random walk can be regarded as a biased random walk in terms of the level and will quickly lead to the root of the tree.

\begin{figure}[h!] 
\begin{center}
\begin{tikzpicture}
\filldraw (0,0) circle (0.07cm);
\filldraw (1,1) circle (0.07cm);
\filldraw (1,0) circle (0.07cm);
\filldraw (1,-1) circle (0.07cm);
\draw [thick] (0,0) -- (1,1);
\draw [thick] (0,0) -- (1,0);
\draw [thick] (0,0) -- (1,-1);
\filldraw (2,1.3) circle (0.07cm);
\filldraw (2,1) circle (0.07cm);
\filldraw (2,0.7) circle (0.07cm);
\draw [thick] (1,1) -- (2,1.3);
\draw [thick] (1,1) -- (2,1);
\draw [thick] (1,1) -- (2,0.7);
\filldraw (2,0.3) circle (0.07cm);
\filldraw (2,0) circle (0.07cm);
\filldraw (2,-0.3) circle (0.07cm);
\filldraw (2,-1.3) circle (0.07cm);
\filldraw (2,-1) circle (0.07cm);
\filldraw (2,-0.7) circle (0.07cm);
\draw [thick] (1,0) -- (2,0.3);
\draw [thick] (1,0) -- (2,0);
\draw [thick] (1,0) -- (2,-0.3);
\draw [thick] (1,-1) -- (2,-1.3);
\draw [thick] (1,-1) -- (2,-1);
\draw [thick] (1,-1) -- (2,-0.7);
\foreach \x in {0,...,12}
{
\filldraw (4,1.5-\x/4) circle (0.07cm);
};
\node at (3,0) {$\dots$};
\end{tikzpicture}
\end{center}
\caption{An example with an induced drift on the random walk.}
\end{figure}
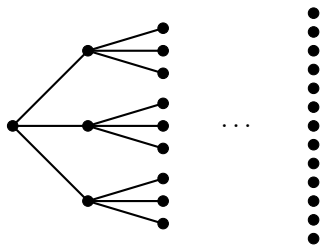
A simple question is the following: what sort of hitting time bounds are possible and how do they depend on the graph. For example, if $G$ is a tree, then we have
$ \mbox{hit}_{v_1}(G) \gtrsim \mbox{diam}(G)^2$. What other results are possible?

\section{Proofs}
\subsection{Proof of Theorem 1}\label{sec:pf:1}

\begin{proof}
We first note that basic Markov Chain theory implies that for each vertex in the graph, the game has an expected value: the graph is finite and connected and therefore a random walk will eventually hit every vertex almost surely. More precise results would be possible: in particular, the likelihood of a random walk taking a very long time before hitting $v_t$ decays exponentially (with a constant depending on the graph), see for example Levin, Peres \& Wilmer \cite{levin}, but this will not be needed here.
Let us denote by $\psi(v)$ the expected value of the game terminating at $v_t$ when starting in a vertex $v \in V$, i.e., $\psi(v) = \mathbb{E}\left(\mbox{payoff}(v \rightarrow v_t)\right)$. 
 Then, by definition of the game, 
\begin{equation}\label{eq:pf1:phi_vt}
 \psi(v_t) =0
\end{equation}
since the game terminates immediately when started at that vertex. 
Let now $v \neq v_t$. By the structure of the game, we are able to relate $\psi(v)$ to the value of $\psi$ in all
adjacent neighbors via the equation
\begin{equation}\label{eq:game:eq}
 \psi(v) = \frac{\lambda_{k}\cdot\phi_k(v)}{\mbox{deg}(v)}  + \frac{1}{\mbox{deg}(v)} \sum_{v \sim_E w} \psi(w). 
\end{equation}

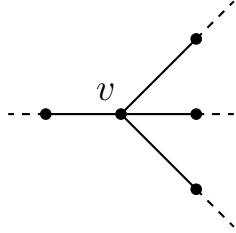
\begin{figure}[h!] 
\begin{center}
\begin{tikzpicture}
\filldraw (1,1) circle (0.07cm);
\filldraw (0,0) circle (0.07cm);
\filldraw (1,0) circle (0.07cm);
\filldraw (1,-1) circle (0.07cm);
\filldraw (-1,0) circle (0.07cm);
\draw [thick] (0,0) -- (1,1);
\draw [thick] (0,0) -- (1,0);
\draw [thick] (0,0) -- (-1,0);
\draw [thick] (0,0) -- (1,-1);
\draw [thick, dashed] (-1,0) -- (-1.5, 0);
\draw [thick, dashed] (1,0) -- (1.5, 0);
\draw [thick, dashed] (1,1) -- (1.5, 1.5);
\draw [thick, dashed] (1,-1) -- (1.5, -1.5);
\node at (-0.2, 0.3) {\LARGE $v$};
\end{tikzpicture}
\end{center}
\caption{The idea behind the proof: relating everything to neighbors}
\end{figure}

We introduce the function $h:V \rightarrow \mathbb{R}$ given by
$$ h(v) = \phi_k(v) - \psi(v).$$
We conclude from equation (\ref{eq:pf1:phi_vt}) that $h(v_t) = \phi_k(v_t)$.
We recall the definition of the Laplacian (\ref{eq:lap}) and its eigenvector $\phi_k,$ with eigenvalue $\lambda_k$,
$$ (D-A) \phi_k = \lambda_k \phi_k,$$
we see that $\phi_k$ satisfies, in all vertices $v$, 
\begin{equation}\label{eq:lap:eig:explicit}
 \phi_k(v) =  \frac{\lambda_{k}\cdot\phi_k(v)}{\mbox{deg}(v)}  + \frac{1}{\mbox{deg}(v)} \sum_{v \sim_E w} \phi_k(w).
\end{equation}
Subtracting (\ref{eq:game:eq}) from (\ref{eq:lap:eig:explicit}), we see that $h$ satisfies, for all vertices $v \in V \setminus \left\{ v_t \right\}$ that
\begin{equation}\label{eq:h:neigh}
 h(v) = \frac{1}{\mbox{deg}(v)} \sum_{v \sim_E w}  h(w).
\end{equation}
We will now prove that $h$ is constant, $h \equiv \phi_k(v_t)$ which establishes the result. Suppose now there exists $v \in V$ such that $h(v) > \phi_k(v_t)$ (the case $h(v) < \phi_k(v_t)$ is analogous). Let us define 
$$ m = \max_{v \in V} h(v) > \phi_k(v_t).$$
Let $v$ be a vertex such that $h(v) = m$. Then 
$$ m = h(v) = \frac{1}{\mbox{deg}(v)} \sum_{v \sim_E w}  h(w) \leq \frac{1}{\mbox{deg}(v)} \sum_{v \sim_E w}  m = m$$
and necessarily $h(w) = m$ for all neighbors of $v$. The graph is connected, therefore $h \equiv m$ in $V \setminus \left\{v_t \right\}$ and $0$ in $v_t$.
However, then it is easy to see that $h$ does not satisfy the equation
$$ h(v) = \frac{1}{\mbox{deg}(v)} \sum_{v \sim_E w}  h(w)$$
in all vertices adjacent to $v_t$. This contradiction establishes that $h \equiv \phi_k(v_t)$ and thus that 
$$\psi(v) \equiv \phi_k(v) - \phi_k(v_t).$$ 
This shows that if we start the game in an arbitrary vertex $v_s \neq v_t$, the expected payoff is $\psi(v_s) = \phi_k(v_s) - \phi_k(v_t)$.
\end{proof}

\subsection{Some Preliminary Considerations}
Before embarking on the proof of Theorem 2, we recall several helpful statements and derive some basic facts for the graphs under consideration. Many
of these facts are either well known (see e.g. Mohar \cite{mohar2, mohar}) or folklore; we recall them for clarity of exposition.

\begin{lemma} Let $G$ be a graph satisfying all the conditions in Theorem 2. Then
 \begin{equation}\label{eq:eig2bound}
 \lambda_2(G) \leq \frac{10}{\emph{diam}(G)^2}.
 \end{equation}
\end{lemma}

\begin{proof}
We use the standard variational characterization of the second eigenvector of the Laplacian 
\begin{equation}\label{eq:lem1:variational}
 \lambda_{2}(G) = \min_{ \left\langle x , \mathbf{1} \right\rangle = 0} \frac{\sum_{v_i \sim_E v_j}{(x_i - x_j)^2}}{\sum_{i=1}^{n}{x_i^2}},
 \end{equation}
 where the minimum is taken with respect to all vectors $x \not\equiv \mathbf{0}$ having mean value 0. 
A standard result from spectral graph theory \cite{chung, fallat} states that the second eigenvalue $\lambda_2 (P_n)$ of a path of length $n$ is:
 \begin{align*}
\lambda_2(P_n)= 2 \left( 1 - \cos{\left( \frac{\pi}{n} \right)} \right).
  \end{align*}
The path $G_p$ of the augmented path graph $G$ is itself a path of length $n=\mbox{diam}(G)$. Using $\lambda_2(P_{\tiny \mbox{diam}(G)})$ and $\varphi$ to denote the second eigenvalue of $G_p$ and its $\ell^2-$normalized second eigenvector:
 \begin{equation}
\begin{split}
  \sum_{i,i+1 ~{\tiny \mbox{on long path $G_p$}}}{(\varphi(i) - \varphi(i+1))^2} &=  \lambda_2(P_{\tiny \mbox{diam}(G)})\\
  &= 2 \left( 1 - \cos{\left( \frac{\pi}{\mbox{diam}(G)} \right)} \right) \leq \frac{10}{\mbox{diam}(G)^2}.
\end{split}
  \end{equation}

We define $p:V \rightarrow \mathbb{R}$ to be the $\ell^2-$normalized second
 eigenvector of the path graph defined on the long path $G_p$ (in particular, $p$ is 0 on vertices that do not lie on the path): 
  \begin{equation}\label{eq:lem1:q}
	p(v) = \begin{cases} \varphi(v) & \text{if~} v \text{~on~path~} G_p \\
	0 & \text{o.w.}
	\end{cases}
\end{equation}
Next, we define the
 vector $q:V \rightarrow \mathbb{R}$ to be 0 on the path and have the constant value corresponding to the value of $p$ in each bubble, i.e.
 \begin{equation}\label{eq:lem1:q}
	q(v) = \begin{cases} p(k) & \text{if~} v \in G_{k,i}\backslash V_{k,i}(0) \\
	0 & \text{o.w.}
	\end{cases}
\end{equation}
We note that the supports of $q$ and $p$ do not overlap, and therefore $q$ is orthogonal to $p$.
We start by considering the vector
$$ z = p+q.$$
 We first note that  when computing the quadratic form $\left\langle z, L z\right\rangle$ (see Equation (\ref{eq:quad})), the only contribution comes from adjacent edges on the path
 graph: the function is constant between all other edges.   Therefore, 
 $$ \sum_{v_i \sim_E v_j}{(z_i - z_j)^2} = \sum_{i,i+1 ~{\tiny \mbox{on long path}}}{(\varphi(i) - \varphi(i+1))^2} = \lambda_2(P_{\tiny \mbox{diam}(G)}) 
 \leq \frac{10}{\mbox{diam}(G)^2}.$$

However, 
$z$ does not have mean value 0. We introduce the orthogonal projection onto vectors with mean value 0 
$$ y =z - \left\langle z, \frac{1}{\sqrt{|V|}}\right\rangle \frac{1}{\sqrt{|V|}}$$
and note that, since we subtract the constant vector,
$$ \left\langle y, Ly\right\rangle = \sum_{v_i \sim_E v_j}{(y_i - y_j)^2} = \sum_{v_i \sim_E v_j}{(z_i - z_j)^2} = \left\langle z, Lz\right\rangle \leq  \frac{10}{\mbox{diam}(G)^2}.$$
It remains to show that $\|y\|^2 \geq 1$ to conclude the result. Using, in that order, the Pythagorean theorem (twice: first for projections, then to evaluate the norm of $z=p+q$), the fact that $p$ has mean value 0, the
Cauchy-Schwarz inequality and the $\ell^2-$normalization of $p$, we arrive at
\begin{align*}
\|y\|^2 &= \|z\|^2 - \left\langle z, \frac{1}{\sqrt{|V|}}\right\rangle^2 \\
&= \|p\|^2 + \|q\|^2 -  \left\langle z, \frac{1}{\sqrt{|V|}}\right\rangle^2 \\
&=  \|p\|^2 + \|q\|^2 -  \left\langle q, \frac{1}{\sqrt{|V|}}\right\rangle^2\\
& \geq \|p\|^2 + \|q\|^2 -  \|q\|^2 = \|p\|^2 = 1.
\end{align*}

Now that we have $y$ such that  $\left\langle y , \mathbf{1} \right\rangle = 0$ and $\|y\|^2 \geq 1$,
\begin{equation}
 \lambda_{2}(G) = \min_{ \left\langle x , \mathbf{1} \right\rangle = 0} \frac{\sum_{v_i \sim_E v_j}{(x_i - x_j)^2}}{\sum_{i=1}^{n}{x_i^2}} \leq \frac{\left\langle y, L y\right\rangle}{\left\langle y, y\right\rangle} \leq  \frac{10}{\mbox{diam}(G)^2}.
 \end{equation}
\end{proof}

  We note that the proof only uses the ``bubble'' structure  and the fixed diameter of augmented path graphs specified in Theorem 2, and therefore the Lemma applies to more general cases of augmented path graphs. 
 Furthermore, we note that similar bounds on the eigenvalues of graphs to which isolated ``bubbles'' are added can be constructed using the original eigenvalues of the graphs without the bubbles, and the construction proposed in this proof.
 This result is optimal up to constants which can be seen as follows: a result of McKay \cite{mohar2} states that
$$ \lambda_2 \geq \frac{4}{|V| \cdot \mbox{diam}(G)}.$$
McKay's bound shows that this upper bound using only the diameter is optimal up to constants for graphs for which $|V| \sim \mbox{diam}(G)$. The next ingredient that we establish is an upper bound on
the maximum size of $\phi_2.$

\begin{lemma} Let $G$ be a graph satisfying all the conditions in Theorem 2. Denote by $\phi_2$ the $\ell^2-$normalized second eigenvector of the graph Laplacian of $G$.  Then 
$$ \max_{v \in V} |\phi_2(v)| \leq  \frac{4}{\emph{diam}(G)^{1/2}}.$$
\end{lemma}
\begin{proof}
We observe that $\phi_2$ has both positive and negative values because it is orthogonal to $\phi_1$ which is a constant vector.
Since we can replace $\phi_2$ without loss of generality by $-\phi_2$, it suffices to bound the maximum from above. Let $\phi_2(v) > 0$ be arbitrary. We use to $\pi$ to denote a path from $\phi_2$ to the nearest vertex $w$ where $\phi_2(w) \leq 0$.
 For a normalized eigenvector $\phi_2$, this shows that
\begin{align*}
 \max_{v \in V}{ | \phi_2(v) |} &\leq \sum_{(i,j) \in \pi}{ | \phi_2(i) - \phi_2(j)|} \\
 &\leq \left( \sum_{(i,j) \in \pi}{ | \phi_2(i) - \phi_2(j)|^2}\right)^{1/2} \left( \mbox{length of}~\pi\right)^{1/2}\\
 &\leq \left( \sum_{(i,j) \in E}{ | \phi_2(i) - \phi_2(j)|^2}\right)^{1/2} \mbox{diam}(G)^{1/2} \\
 &\leq \lambda_2^{1/2} \mbox{diam}(G)^{1/2} \leq \left( \frac{10}{\mbox{diam}(G)^2}\right)^{1/2} \mbox{diam}(G)^{1/2}\\
 &\leq \frac{4}{\mbox{diam}(G)^{1/2}},
\end{align*}
where the second line uses the Cauchy--Schwarz inequality, the fourth line uses equation (\ref{one}) and the fifth line uses equation (\ref{eq:eig2bound}). 
Since this holds for every vertex, we have 
\begin{equation}\label{eq:linf}
  \| \phi_2\|_{\ell^{\infty}} \leq \frac{4}{\mbox{diam}(G)^{1/2}}.
  \end{equation}
\end{proof}

We use this inequality to derive a lower bound on $\| \phi_2\|_{\ell^1(V)}$.
\begin{lemma} Let $G$ be a graph satisfying all the conditions in Theorem 2.  Denote by $\phi_2$ the $\ell^2-$normalized second eigenvector of the graph Laplacian of $G$. 
Then 
$$ \sum_{v \in V} |\phi_2(v)| \geq  \frac{\emph{diam}(G)^{1/2}}{4} \| \phi_2\|_{\ell^2}.$$
\end{lemma}
\begin{proof} Let us assume w.l.o.g. that $\| \phi_2\|_{\ell^2} = 1$. 
The normalization in $\ell^2$ of  $\phi_2$ implies by the H\"{o}lder inequality that
$$ 1 = \sum_{v \in V}{\phi_2(v)^2} \leq \| \phi_2\|_{\ell^{\infty}} \| \phi_2\|_{\ell^1} $$
and therefore, by equation (\ref{eq:linf}):
$$ \|\phi_2\|_{\ell^1} \geq \frac{\mbox{diam}(G)^{1/2}}{4}.$$
\end{proof}
We note that since $\phi_2$ has mean value 0 (it is orthogonal to the constant $\phi_1$), the positive part and the negative part cancel out and therefore
\begin{equation}\label{eq:maxphi}
  \sum_{v \in V}{ \max\left\{ \phi_2(v), 0 \right\}} \geq \frac{\mbox{diam}(G)^{1/2}}{8}.
\end{equation}
As the last ingredient, we refer to the known result (see e.g. \cite{blum}) about the hitting time of a path $P_k$
\begin{equation}\label{eq:blum}
  \mbox{hit}(P_k) = (k-1)^2.
\end{equation}

\subsection{Proof of Theorem 2}
\begin{proof}
The proof has two parts: first, we show that there are vertices $v_1, v_2$ on the long path $G_p$ of $G$ such that
\begin{equation}\label{eq:pf2:cases} 
\phi_2(v_1) < 0 < \phi_2(v_2).
\end{equation}
 In the second part of the proof, we show that the maximum and minimum are attained at the endpoints. Both parts of the proof makes use of the game interpretation in Theorem 1. We assume throughout the proof that $\phi_2$ is a fixed $\ell^2-$normalized eigenvector associated with the eigenvalue $\lambda_2 > 0$. We make use of the notation introduced in Definition \ref{def:path-based-graph} to reference particular subgraphs and vertices.

{\bf Part 1:}
The argument exploits the bounds derived in \S 3.2. Suppose the statement (\ref{eq:pf2:cases}) is false. Then, for all vertices $v$ on the long path $G_p$, we either have $\phi_2(v) \geq 0$ or $\phi_2(v) \leq 0$. Without loss of generality (after possibly replacing $\phi_2$ by $-\phi_2$) we can assume that for all vertices $v$ on the long path $G_p$, we have $\phi_2(v) \leq 0$. However, since $\phi_2$ has mean value 0, we must conclude that there are positive values somewhere else. 
Naturally, this has to happen inside (some of) the bubbles $G_{k,i}$. 
We recall that by Fiedler's theorem, we have that
$$ \left\{v \in V: \phi_2(v) \geq 0 \right\} \qquad \mbox{is connected.}$$
We first deal with the case where the positive values are all assumed within exactly one bubble $G_{k,i}$.
Then, however, by Equation (\ref{eq:maxphi}) and using Equation (\ref{eq:linf}), we have 
\begin{align*}
 \frac{\mbox{diam}(G)^{1/2}}{8} &\leq \sum_{v \in V}{ \max\left\{ \phi_2(v), 0 \right\}}\\
 &= \sum_{v \in G_{k,i}}{ \max\left\{ \phi_2(v), 0 \right\}} \\
 &\leq |G_{k,i}| \cdot \|\phi_2\|_{\ell^{\infty}} \leq \frac{4 \cdot |G_{k,i}|}{\mbox{diam}(G)^{1/2}}
\end{align*}
and thus
$$ |G_{k,i}| \geq \frac{\mbox{diam}(G)}{32}$$
which is a contradiction to assumption (1) of Theorem 2. This shows that it is not possible for all the positive values to be assumed within one bubble $G_{k,i}$.

\begin{figure}[h!]
\begin{center}
\begin{tikzpicture}
\filldraw (4,0) circle (0.06cm);
\filldraw (5,0) circle (0.06cm);
\filldraw (6,0) circle (0.06cm);
\draw [thick] (2,0) -- (4,0);
\draw [very thick] (4,0) -- (6,0);
\draw [thick] (6,0) -- (8,0);
\draw [thick] (5,0) to[out=60, in=270] (5.5, 1) to[out=90, in=0] (5, 2) to[out=180, in=90] (4.5, 1) to[out=270, in=120] (5, 0);
\node at (5,1) {$G_{k,i}$};
\draw [thick] (5,0) to[out=30, in=270] (6.5,1) to[out=90, in=0] (6,2) to[out=180, in=90] (5.8, 1.5) to[out=270, in=40] (5,0);
\draw [thick] (4,0) to[out=60, in=270] (4.3, 1) to[out=90, in=0] (4, 2) to[out=180, in=90] (3, 1) to[out=270, in=120] (4, 0);
\node at (6.1,1) {$G_{k,j}$};
\node at (3.7, 1) {$G_{l,m}$};
\end{tikzpicture}
\end{center}
\caption{If $\phi_2 \leq 0$ on the long path, then the set of vertices $\phi_2(v) = 0$, if non-empty, has to be group of consecutive vertices. }\label{fig:thm2part15}
\end{figure}
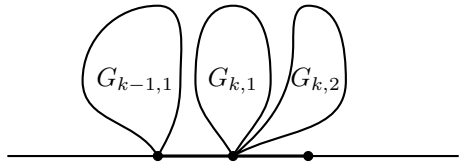

Therefore the set $\left\{v \in V: \phi_2(v) > 0\right\}$ must have elements from at least two different bubbles. We first consider the case that both bubbles are associated with the same vertex {$V_p(k)$} on the long path, {we denote two of these bubbles by} $G_{k,i}$ and $G_{k,j}$. Since {$\phi_2\left(V_p(k)\right) \leq 0$} and since $ \left\{v \in V: \phi_2(v) \geq 0 \right\}$ is connected, we infer that {$\phi_2\left(V_p(k)\right) = 0$}. This observation generalizes: if $\left\{v \in V: \phi_2(v) > 0\right\}$ has elements from different bubbles that are connected to different vertices on the long path, $G_{k,i}$ and $G_{\ell, m}$, then we can again infer that $\phi_2$ vanishes on the segment of the long path connecting $k$ and $\ell$ (see {Figure} \ref{fig:thm2part15}).\\

In either case, we observe that each bubble $G_{k,i}$ containing vertices where $\phi_2(v) > 0$ is necessarily connected to a vertex {$V_p(k)$} on the long path for which {$\phi_2\left(V_p(k)\right)=0$}.
Let now $G_{k,1}$ be an arbitrary bubble containing positive values: we can then apply the game (Theorem 1) starting in the vertex of $G_{k,1}$ with the largest entry and ending in the vertex $V_p(k)$. We recall that the game would involve taking an expected number of $\mbox{hit}(G_{k,1})$ steps (defined in Equation (\ref{eq:path:time})) before it terminates, at each step it accumulates  $\lambda_{2} \phi_2(w)/\mbox{deg}(w) \leq \lambda_{2} \left(\max_{v \in V_{k,1}} \phi_2(v) \right)$ for the vertex $w$ it visits in that step. We conclude that
\begin{align}\label{eq:thm2:gamegraph}
\max_{v \in V_{k,1} } \phi_2(v) - \phi_2(V_p(k)) &= \mathbb{E} \left(\mbox{payoff}\right)\\
 &\leq  \lambda_{2} \cdot  \left(\max_{v \in V_{k,1}} \phi_2(v) \right)  \cdot \mbox{hit}(G_{k,1})
\end{align}
where {$V_p(k)$} is the vertex on the path where $G_{k,1}$ is connected to the path $G_p$.
Recall that, by Equation (\ref{eq:eig2bound}), $\lambda_2(G) \leq 10 \cdot \mbox{diam}(G)^{-2}$ and, by assumption 2 of Theorem 2,
$$ \mbox{hit}(G_{k,1}) \leq \frac{\mbox{diam}(G)^2}{20}.$$
These imply that
\begin{equation} \label{eq:thm2:lambdahit}
  \lambda_2(G)\cdot \mbox{hit}(G_{k,1})  \leq \frac{1}{2}
  \end{equation}
and therefore, by Equation (\ref{eq:thm2:gamegraph}),
\begin{align} 
\max_{v \in V_{k,1}} \phi_2(v)  &\leq \frac{\phi(V_p(k))}{1 - \lambda_2 \cdot \mbox{hit}(G_{k,1})  } 
\end{align}
with a positive denominator by Equation (\ref{eq:thm2:lambdahit}), and a non-positive numerator by our assumption for this case, which implies,
\begin{align} \label{eq:thm2:upperphi}
\max_{v \in V_{k,1}} \phi_2(v)  &\leq  0,
\end{align}

Then, however, it is not possible for $G_{k,i}$ to contain any positive entries. This contradiction concludes the first part of the argument.\\

{\bf Part 2:} It remains to show that the maximum and the minimum occur at the endpoints of the path. By symmetry, it suffices to show this for the maximum, the case of the minimum is completely analogous. After possible changing the orientation of the path $G_p$, 
we can infer from part 1 that there are two adjacent vertices $V_p(m)$ and $V_p(m+1)$ such that
$$ \phi_2(V_p(m)) > 0 \geq \phi_2(V_p(m+1)).$$
By Fielder's theorem, all the vertices $V_p(k)$ of the path $G_p$ with $k \leq m$, and all the vertices of all the bubbles bubbles $G_{k,i}$ with $k \leq m$, connected to these vertices of the path, all have a positive value of $\phi_2$. 
We recall again the game from theorem $1$, and observe that any game originating at $V_p(i)$ and terminating at $V_p(j)$ with $i<j<m$ can only accumulate positive values at each step since $\phi_2$ is positive. We conclude that the values along the long path $G_p$ are decreasing (w.l.o.g. and possibly with a change of direction of the path):
\begin{equation}\label{eq:path:monotone}
 \phi_2(V_p(1)) \geq \phi_2(V_p(2)) \geq \dots \geq \phi_2(V_p(m)).
\end{equation}
This shows that the maximum on the long path is assumed at the end-point. 

It remains to exclude the case where the maximum is assumed inside a bubble. Let us assume that the maximum is assumed in $G_{q,1}\backslash V_p(k)$. Then, again by Fiedler's theorem, we have $q \leq m$.
We now play the game (Theorem 1) twice: first to obtain an upper bound on the maximum of $\phi_2$ (under the assumption 
that this maximum is assumed in $G_{q,1}$) and then to obtain a lower bound on $\phi_2$.
First, we consider a game that starts at the vertex that assumes the maximum value of $\phi_2$. Again, we consider an upper bound on what the game can accumulate at each of the expected number of $\mbox{hit}(G_{q,1})$ steps that it takes  (defined in Equation (\ref{eq:path:time})):
\begin{align}\label{eq:thm2:p2:gamegraph}
\max_{v \in V_{q,1} } \phi_2(v) - \phi_2(V_p(q)) &= \mathbb{E} ~\mbox{payoff}\\
 &\leq  \lambda_{2} \cdot  \left(\max_{v \in G_{q,1}} \phi_2(v) \right)  \cdot \mbox{hit}(G_{q,1})
\end{align}
where $V_p(q)$ is the vertex on the path $G_p$ where $G_{q,1}$ is connected to the path (by assumption, the maximum is not assumed on the path).

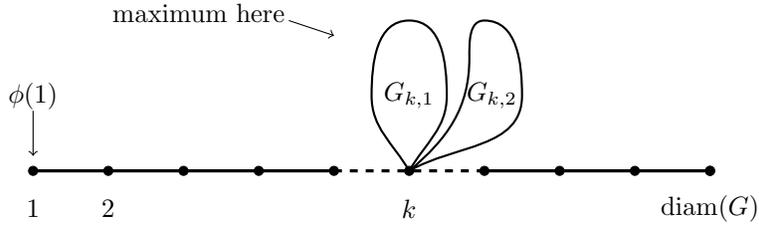
\begin{figure}[h!]
\begin{center}
\begin{tikzpicture}
\filldraw (0,0) circle (0.06cm);
\filldraw (1,0) circle (0.06cm);
\filldraw (2,0) circle (0.06cm);
\filldraw (3,0) circle (0.06cm);
\filldraw (4,0) circle (0.06cm);
\filldraw (5,0) circle (0.06cm);
\filldraw (6,0) circle (0.06cm);
\filldraw (7,0) circle (0.06cm);
\filldraw (8,0) circle (0.06cm);
\filldraw (9,0) circle (0.06cm);
\filldraw (9,0) circle (0.06cm);
\draw [very thick] (0,0) -- (4,0);
\draw [very thick, dashed] (4,0) -- (6,0);
\draw [very thick] (6,0) -- (9,0);
\node at (0, -0.5) {1};
\node at (1, -0.5) {2};
\node at (9,-0.5) {$\mbox{diam}(G)$};
\node at (5,-0.5) {$k$};
\draw [thick] (5,0) to[out=60, in=270] (5.5, 1) to[out=90, in=0] (5, 2) to[out=180, in=90] (4.5, 1) to[out=270, in=120] (5, 0);
\node at (5,1) {$G_{q,1}$};
\draw [thick] (5,0) to[out=30, in=270] (6.5,1) to[out=90, in=0] (6,2) to[out=180, in=90] (5.8, 1.5) to[out=270, in=40] (5,0);
\node at (6.1,1) {$G_{q,2}$};
\node at (0,1) {$\phi(1)$};
\draw [->] (0,0.8) -- (0,0.2);
\node at (2.2,2.1) {maximum here};
\draw [->] (3.4, 2) -- (4,1.8);
\end{tikzpicture}
\end{center}
\caption{Setup of part 2 of the proof }\label{fig:thm2part2}
\end{figure}

As in part 1, Equation (\ref{eq:thm2:p2:gamegraph}) implies that
\begin{align} \label{eq:thm2:upperphi}
\max_{v \in V_{q,1}} \phi_2(j)  &\leq \frac{\phi_2(V_p(q))}{1 - \lambda_2 \cdot \mbox{hit}(G_{q,1})  },
\end{align}
with a positive denominator by Equation (\ref{eq:thm2:lambdahit}).
We now start the game of  Section \ref{sec:game} in the first vertex $V_p(1)$ of the long path $V_p$ and obtain
$$  \phi_2(V_p(1)) - \phi_2(V_p(q)) = \mathbb{E}~ \mbox{payoff} \left(  V_p(1) \rightarrow V_p(q) \right).$$
We recall that in the game we jump around randomly and add
$$ \lambda_2 \frac{\phi_2(v)}{\mbox{deg}(v)} \qquad \mbox{at every step.}$$

{\bf Reduced case: a trivial path from $1$ to $q$.} Let us first consider a simplified case where the vertices $V_p(1)$ to $V_p(q-1)$ do not have any bubbles attached to them.
In that case, the degree of each vertex leading from $V_p(1)$ to $V_p(q)$ is at most $2$. Since the values of $\phi_2$ on this path are monotone decreasing (Equation (\ref{eq:path:monotone})), each step in the game that starts at $V_p(1)$ and ends at $V_p(q)$ contributes at least $\lambda_k \phi_2(V_p(q)) /2$.
Denoting the expected number of steps of a random walk on a path of length $q$ by $\mbox{hit}(P_q)$, and using Theorem 1, we obtain
\begin{equation}
	\phi_2(V_p(1)) - \phi_2(V_p(q)) \geq \frac{1}{2} \lambda_2\cdot \phi_2(V_p(q)) \cdot \mbox{hit}(P_q),
\end{equation}
and therefore,
\begin{equation}\label{eq:thm2:p2:path}
	\phi_2(V_p(1)) \geq \phi_2(V_p(q)) \left( 1+  \frac{1}{2} \lambda_2 \cdot \mbox{hit}(P_q) \right).
\end{equation}

By the assumption for part 2, $\phi_2(V_p(1)) < \max_{v \in V_{q,1} } \phi_2(v)$ and therefore, using equations (\ref{eq:thm2:upperphi}) and (\ref{eq:thm2:p2:path}),
\begin{equation}
\phi_2(V_p(q)) \left( 1+  \frac{1}{2} \lambda_2 \mbox{hit}(P_q) \right) < \frac{\phi_2(V_p(q))}{1 - \lambda_2 \cdot \mbox{hit}(G_{q,1})  }
\end{equation}
with $\phi_2(V_p(q))  > 0$ in this part of the proof. 
Since the denominator is positive we have
\begin{equation}
 1+\frac{1}{2} \lambda_2  \cdot \mbox{hit}(P_q) - \lambda_2 \cdot \mbox{hit}(G_{q,1}) - \frac{1}{2} \lambda_2^2 \cdot \mbox{hit}(G_{q,1})  \cdot \mbox{hit}(P_q) \leq  1 ,
  \end{equation}
and therefore
\begin{equation}
 \mbox{hit}(G_{q,1}) \geq     \frac{\mbox{hit}(P_q)}{2}    - \frac{\lambda_2  \cdot \mbox{hit}(P_q) \mbox{hit}(G_{q,1})} {2} .
  \end{equation}

By Equation (\ref{eq:eig2bound}) and Equation (\ref{eq:blum}),
\begin{equation}
\lambda_2 \cdot \mbox{hit}(P_q) = \lambda_2 \cdot (q-1)^2 \leq  \lambda_2 \cdot \mbox{diam}(G)^2 \leq 10
  \end{equation}
Therefore,
\begin{equation}
 \mbox{hit}(G_{k,1}) \geq     \frac{\mbox{hit}(P_q)}{2}     - 5 \cdot \mbox{hit}(G_{q,1}) .
  \end{equation}
Thus,
\begin{equation}
 \mbox{hit}(G_{q,1}) \geq   \frac{  \mbox{hit}(P_q)}{12}.
  \end{equation}

Using the hitting time bound (\ref{eq:blum}) and assumption (2) of Theorem 2,
\begin{equation}
\frac{q^2}{50} \geq \mbox{hit}(G_{q,1}) \geq   \frac{  \mbox{hit}(P_q)}{12} = \frac{(q-1)^2}{12},
  \end{equation}
which is a contradiction. \\

{\bf General case: bubbles attached to the path from $1$ to $q$.} After treating the the case of no bubbles attached to the path between $V_p(1)$ to $V_p(q-1)$, we return to the general case.
We will compare the game in the general case to the game in the case of no bubbles, and show that that adding a bubble to the path only increases the payoff from the game, and therefore the proof for the reduced case holds for the general case.
In the reduced case, where there are no bubbles, each vertex from $V_p(2)$ to $V_p(q-1)$ on the path has degree $\mbox{deg}(v) = 2$ (and $V_p(1)$ has $\mbox{deg}(v) = 1$). 
We observed that each visit to a vertex $w$ on a path in the reduced case with no bubbles contributes at least $\lambda_k \phi_2(V_p(w)) /2$ to the payoff in the game, and that this must be a positive number.
Now suppose that some of the vertices $V_p(2)$ to $V_p(q-1)$ have $\mbox{deg}(v) \geq 3$. 
Let $v \in \{ V_p(2) ,..., V_p(q-1) \}$ be a vertex on the path, with $\mbox{deg}(v) \geq 3$.
Once the game enters the vertex $v$, we can examine how many steps of the game it would take before it leaves $v$ and the bubbles associated with $v$, and more on to some other vertex on the path. 
We have encountered a similar calculation in our discussion of the Fiedler rose above.
 Let us denote $r = (\mbox{deg}(v)-2)/\mbox{deg}(v)$, then 
\begin{align*}
\mathbb{E}~\mbox{\# encounters before moving on } &\geq 1(1-r) + 2 r (1-r) + 3 r^2 (1-r) + \dots \\
&= \sum_{j=1}^{\infty}{ j (1-r) r^{j-1}} = \frac{1}{1-r} = \frac{\mbox{deg}(v)}{2}.
\end{align*}
It follows that each time the game visits the set of vertex on the path and its bubbles contributes at least $\lambda_k \phi_2(V_p(q)) /2$, with the minimum contribution of $\lambda_k \phi_2(V_p(q)) /2$ is achieved when the vertex on the path has no bubbles attached to it. 
From this point we can proceed with the same argument as the one made for the reduced case above. 
We conclude that the maximum cannot be achieved inside any of the bubbles, and due to the monotonicity along the path, the maximum must be achieved at the end of the path.

\end{proof}

\subsection{Proof of the Proposition.} \label{pf:prop}

\begin{proof}
  We give a simple estimate that does not yield the sharp constant (for which we refer to Aldous \& Fill \cite{aldous}), however, the argument is very short and very simple. We can label each vertex based on the length of the shortest path from it to $v_1$, since the graph is connected, this label would be a well defined finite number which is at most $\mbox{diam}(G)$. Each vertex in the graph (other than $v_1$) is connected to at least one other vertex that is closer to $v_1$ (i.e., with a lower label) because there is always a shortest path from any vertex to $v_1$. Therefore, at each step of the random walk we decrease the distance to $v_1$ with probability of at least $1/\mbox{deg}(v) \geq \Delta^{-1}$ (until $v_1$ is reached).
 This means the likelihood of hitting $v_1$ within $\mbox{diam}(G)$ steps is at least 
  $$p = \Delta^{-\mbox{\tiny diam}(G)}$$
  which is simply the likelihood of picking a `correct' edge $\mbox{diam}(G)$ times in a row. This likelihood is, in general, extremely small and we are unlikely to succeed in the first attempt and it remains to understand how many such attempts are needed. The process is stochastically dominated by a simple geometric random variable with likelihood $p$.  Thus
  $$ \mathbb{E}\left( \mbox{number of attempts} \right) = \sum_{k=1}^{\infty} k (1-p)^{k-1} p = \frac{1}{p}.$$
Each such attempt is comprised of a random walk of length $\mbox{diam}(G)$ and thus the total number of steps is bounded from above by 
  $$  \mbox{hit}(G,v_1)  \leq \mbox{diam}(G) \cdot \Delta^{{\tiny \mbox{diam}(G)}}.$$
\end{proof}

\section*{Acknowledgements}
R.R.L. is supported by NIH/NIGMS (grant number R01GM136780) and AFOSR (grant number FA9550-21-1-0317). S.S. is supported by the NSF (grant number DMS-2123224) and the Alfred P. Sloan Foundation.

\end{document}